\documentclass[11pt]{article}

\usepackage{amsmath,amssymb,amsthm}
\usepackage{mathrsfs}
\usepackage{enumerate}
\usepackage{hyperref}

\theoremstyle{plain}
\newtheorem{theorem}{Theorem}[section]
\newtheorem{proposition}[theorem]{Proposition}

\newtheorem{corollary}[theorem]{Corollary}

\theoremstyle{definition}
\newtheorem{definition}[theorem]{Definition}

\newtheorem{remark}[theorem]{Remark}

\title{    A Route to Harris ergodicity for Non-Feller Markov Kernels}

\author{
Jean-Gabriel~Attali\thanks{Contact: jean-gabriel.attali@devinci.fr} \\
\small De Vinci Higher Education, De Vinci Research Center, Paris, France
}

\date{} 

\begin{document}
\maketitle

\begin{abstract}
For non-Feller Markov kernels satisfying a quasi-Feller factorization, the
compact-petite-set criterion gives petite compact sets under
\(\psi\)-irreducibility when the support of \(\psi\) has non-empty interior.
Thus, for coercive Lyapunov functions, the sublevel sets used in the
Harris--Lyapunov argument are petite. Under aperiodicity they are small. The
Hairer--Mattingly contraction theorem then yields geometric ergodicity in
weighted total variation under the usual geometric drift condition.
\end{abstract}

\section{Introduction}

Harris' theorem gives a standard route from irreducibility and Lyapunov
stability to convergence of Markov chains on general state spaces. In the
geometric case, one combines a Foster--Lyapunov drift condition with a
minorization condition on a suitable Lyapunov sublevel set; see Harris
\cite{Harris1956}, Nummelin \cite{Nummelin1984}, and Meyn--Tweedie
\cite{MeynTweedie2009}. Hairer and Mattingly
\cite{HairerMattingly2011} gave a direct contraction proof in weighted total
variation under these same two hypotheses.

For discontinuous stochastic recursions, the drift estimate is often easier to
check than the minorization on Lyapunov sublevel sets. This is the point
addressed here. The quasi-Feller condition introduced in \cite{Attali2004}
covers kernels which need not be Feller, but which factor through a Feller
kernel after a measurable map. Under \(\psi\)-irreducibility, provided that \(\operatorname{supp}\psi\) has
non-empty interior, the compact-petite-set criterion of \cite{Attali2004}
implies that compact sets are petite.

If the Lyapunov function is coercive, its sublevel sets are compact. Hence, for
quasi-Feller kernels satisfying the preceding irreducibility assumption, these
sublevel sets are petite. With aperiodicity, the usual petite-to-small
implication makes them small. The local hypothesis in the Hairer--Mattingly
theorem is then obtained from the quasi-Feller compact-petite criterion.

\section{Quasi-Feller kernels and compact petite sets}
\label{sec:qf}

Let \(S\) be a Polish space endowed with its Borel \(\sigma\)-field
\(\mathcal B(S)\). We denote by \(C_b(S)\) the space of bounded continuous
real-valued functions on \(S\), and by \(\mathcal P(S)\) the set of Borel
probability measures on \(S\).

Let \(P\) be a Markov transition kernel on \(S\). We write
\[
        Pf(x)=\int_S f(y)P(x,dy),
        \qquad f\in B_b(S),
\]
and similarly for the action of \(P\) on finite measures.

\subsection{Quasi-Feller kernels}

We recall the quasi-Feller condition introduced in \cite{Attali2004}. It is a
weak regularity assumption allowing \(P\) itself to be non-Feller, while
requiring that the discontinuity can be factorized through a measurable map.

\begin{definition}[Quasi-Feller kernel]
\label{def:qf}
A Markov kernel \(P\) on \(S\) is called quasi-Feller if there exist a Polish
space \(W\), a measurable map \(H:S\to W\), and a Markov kernel \(Q\) from
\(W\) to \(S\) such that:
\begin{enumerate}
    \item for every \(f\in C_b(S)\), the function
    \[
            Qf(w)=\int_S f(y)Q(w,dy)
    \]
    belongs to \(C_b(W)\);
    \item for every \(f\in C_b(S)\),
    \[
            Pf=Qf\circ H;
    \]
    \item if \(D_H\) denotes the set of discontinuity points of \(H\), then
\[
        Q(w,D_H)=0,
        \qquad
        w\in \bigcup_{K\subset S\ \mathrm{compact}} \overline{H(K)} .
\]
\end{enumerate}
\end{definition}

The Feller case corresponds to the special choice \(W=S\),
\(H=\operatorname{Id}\), and \(Q=P\).

Definition~\ref{def:qf}  allows discontinuous transition operators while preserving the
lower semicontinuity needed in compactness arguments. More precisely, if \(O\subset S\) is open, then
\(w\mapsto Q(w,O)\) is lower semicontinuous, and the condition
\(Q(w,D_H)=0\) ensures that the discontinuity set of \(H\) is not charged by
the one-step transition after factorization.

In what follows we use the compact-petite-set criterion of
\cite{Attali2004} in the form stated in
Theorem~\ref{thm:qf-compact-petite}. No additional stability assumption on
the iterates of the quasi-Feller factorization is needed beyond the hypotheses
of that theorem.

\subsection{Irreducibility and petite sets}

We use the standard notion of \(\psi\)-irreducibility.

\begin{definition}[\(\psi\)-irreducibility]
A Markov kernel \(P\) on \(S\) is \(\psi\)-irreducible if there exists a
\(\sigma\)-finite measure \(\psi\) on \((S,\mathcal B(S))\) such that, for
every \(A\in\mathcal B(S)\) with \(\psi(A)>0\) and every \(x\in S\),
\[
        \sum_{n\ge 0} P^n(x,A)>0.
\]
\end{definition}

The relevant local sets in the Harris theory are petite sets.

\begin{definition}[Petite set]
A measurable set \(C\subset S\) is petite if there exist a probability
distribution \(a=(a_n)_{n\ge1}\) on \(\mathbb N^*=\{1,2,\ldots\}\) and a
non-zero measure \(\nu\) on \(S\) such that
\[
        \sum_{n\ge1} a_n P^n(x,\cdot)\ge \nu(\cdot),
        \qquad x\in C .
\]
Equivalently, \(C\) is small for the averaged kernel
\[
        K_a=\sum_{n\ge1}a_nP^n .
\]
\end{definition}

Small sets correspond to the special case where the minorization holds at a
fixed time.

\begin{definition}[Small set]
A measurable set \(C\subset S\) is small if there exist \(m\ge1\) and a
non-zero measure \(\nu\) such that
\[
        P^m(x,\cdot)\geq \nu(\cdot),
        \qquad x\in C.
\]
\end{definition}

Every small set is petite. The converse is false in general; periodicity is
the basic obstruction. Under aperiodicity, the converse holds in the
\(\psi\)-irreducible setting, in the sense recalled in
Section~\ref{sec:harris}.

\subsection{Compact petite sets in the quasi-Feller setting}

The following result is the main input from \cite{Attali2004}.

\begin{theorem}[Compact petite sets for quasi-Feller chains]
\label{thm:qf-compact-petite}
Let \(P\) be a quasi-Feller Markov kernel on a Polish space \(S\). Assume that
\(P\) is \(\psi\)-irreducible and that
\[
        \operatorname{Int}(\operatorname{supp}\psi)\neq\varnothing .
\]

Then every compact set \(K\subset S\) is petite.
\end{theorem}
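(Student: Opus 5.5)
The argument is the classical Meyn--Tweedie route (``$T$-chain with reachable open set'') adapted so that lower semicontinuity is supplied by the Feller kernel \(Q\) instead of \(P\). Fix a nonempty open set \(O\subset \operatorname{Int}(\operatorname{supp}\psi)\). Then \(\psi(O)>0\), because a nonempty open subset of the support has positive measure. Let \(a=(2^{-n})_{n\ge1}\) and \(K_a=\sum_{n\ge1}2^{-n}P^n\). By \(\psi\)-irreducibility, \(K_a(x,O)>0\) for every \(x\). Indeed, the term \(n=0\) cannot carry the sum in the definition unless \(x\in O\); but if \(x\in O\), then \(\psi(O)>0\) still gives \(P^n(x,O')>0\) for some \(n\ge1\), applying irreducibility from \(y\) and using \(P^{n}=P\,P^{n-1}\). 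The goal is to show that \(O\), or a small ball \(B\subset O\), is reached with probability bounded below \emph{uniformly} on the compact set \(K\). The second goal is to produce a common minorizing measure once the chain is in \(B\). The two are then glued by the Chapman--Kolmogorov inequality \(K_a*K_b\ge\ldots\). Petiteness is preserved under convolution of sampling distributions, which gives the conclusion.

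\textbf{Step 1 (lower semicontinuity through the factorization).} For open \(U\subset S\), I will show that \(x\mapsto P(x,U)=Q(H(x),U)\) is lower semicontinuous at every \(x\in K\), when restricted along sequences in compact sets. Write \(1_U=\sup_k f_k\) with \(f_k\in C_b\) increasing. Condition 2 then gives \(P(x,U)=\sup_k Qf_k(H(x))\). Take \(x_j\to x\) with all points in a compact set \(K'\). If \(x\) is a continuity point of \(H\), then \(H(x_j)\to H(x)\), and lower semicontinuity of \(w\mapsto Q(w,U)\) gives \(\liminf P(x_j,U)\ge P(x,U)\). Discontinuity points of \(H\) are the problem. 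Condition 3 says that \(P(x,D_H)=Q(H(x),D_H)=0\) for \(x\) in compacts. So after one step the chain a.s.\ sits at continuity points of \(H\). I therefore plan to apply the argument to the two-step function
\[
x\mapsto P^2(x,U)=\int P(y,U)\,P(x,dy),
\]
where the inner function is l.s.c.\ at \(P(x,\cdot)\)-a.e.\ \(y\). Tightness of \(\{P(x,\cdot):x\in K\}\) is needed to stay in compacts, and it follows from continuity of \(Qf\) on the compact set \(\overline{H(K)}\)~--- a point I would verify carefully. I then use Fatou together with a portmanteau-type argument for the measures \(Q(w_j,\cdot)\to Q(w,\cdot)\) weakly, along a subsequence of \(H(x_j)\) converging in \(\overline{H(K)}\). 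This yields that \(x\mapsto K_a(x,U)\) is ``essentially'' l.s.c. on \(K\), namely \(\liminf_j K_a(x_j,U)\ge \inf\) over limit points \(w\) of \(\ldots\)

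\textbf{Step 2 (uniform reachability).} I will combine Step 1 with compactness. For each \(x\in K\), irreducibility gives \(n_x\) and \(\varepsilon_x>0\) with \(P^{n_x}(x,O)>2\varepsilon_x\). Lower semicontinuity gives a neighbourhood on which \(P^{n_x}(\cdot,O)>\varepsilon_x\), or the \(Q\)-side analogue \(Q(w,\cdot)\) near \(\overline{H(K)}\). A finite subcover then yields \(\inf_{x\in K}K_a(x,O)\ge\delta>0\). I expect this to be the main obstacle. Points of \(K\) where \(H\) is discontinuous force the cover to be done in \(W\) over the compact set \(\overline{H(K)}\), using continuity of \(Q\) there. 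Condition 3 is exactly what ensures the limiting kernels \(Q(w,\cdot)\) for \(w\in\overline{H(K)}\setminus H(K)\) still do not charge \(D_H\).

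\textbf{Step 3 (a petite target).} I will show that some nonempty open \(B\subset O\) is petite. Irreducibility and the fact that \(P\) is quasi-Feller give a ``\(T\)-chain'' type continuous component in the sense of Meyn--Tweedie. For \(x_0\in O\), I take \(\nu=\) the restriction of \(K_a(x_0,\cdot)\) to a set where a lower-semicontinuous density argument applies. Alternatively, I can use the standard fact that a \(\psi\)-irreducible chain admits a petite set \(C\) with \(\psi(C)>0\), together with \(\inf_{x\in B}K_a(x,C)>0\) on a small ball \(B\) obtained from Step 1. Then \(B\) is petite. Finally, for \(x\in K\),
\[
K_{a*b}(x,\cdot)\ge \int_B K_a(x,dy)K_b(y,\cdot)\ge \delta\,\nu(\cdot),
\]
with \(O\) replaced by \(B\) in Step 2. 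This shows that \(K\) is petite.
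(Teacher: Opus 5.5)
Your Steps 1--2 follow the mechanism the paper sketches. An open \(O\subset\operatorname{Int}(\operatorname{supp}\psi)\) is accessible from every point, lower semicontinuity of \(w\mapsto Q(w,O)\) is pushed through the factorization using condition~3, and a finite cover of \(\overline{H(K)}\) in \(W\) gives \(\inf_{x\in K}K_a(x,O)>0\). Stated cleanly: \(G=\sum_{n\ge0}2^{-n-1}P^n(\cdot,O)\) is strictly positive and lower semicontinuous at every continuity point of \(H\). Since \(Q(w,D_H)=0\) on \(\overline{H(K)}\), the map \(w\mapsto\int Q(w,dy)G(y)\) is positive and lower semicontinuous at each point of that set, and it equals \(\sum_{n\ge1}2^{-n}P^n(x,O)\) at \(w=H(x)\). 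No tightness is needed.

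The genuine gap is Step~3, which is where the minorizing measure must come from; the paper's sketch only summarizes this as ``yielding a uniform minorization.'' None of your three routes supplies it. Invoking a ``\(T\)-chain type continuous component'' is circular: for \(\psi\)-irreducible chains such a component already makes compact sets petite, and constructing it is exactly the problem at hand. There is no density for a ``lower-semicontinuous density argument'': weak continuity of \(Q\) only gives \(\liminf_j Q(w_j,O)\ge Q(w,O)\) for open \(O\). It never gives a setwise bound \(K_a(x,\cdot)\ge c\,K_a(x_0,\cdot)|_A\) for \(x\) near \(x_0\). Finally, Step~1 says nothing about \(\inf_{x\in B}K_a(x,C)\) for a \(\psi\)-positive petite set \(C\), because \(C\) is not open; for closed \(C\), the map \(x\mapsto K_a(x,C)\) is upper, not lower, semicontinuous at continuity points of \(H\). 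What is missing is an accessible open set that carries a minorization.

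One way to produce that set is a Baire category argument adapted to the factorization. Take a \(\psi\)-positive small set \(C\) with \(P^M\ge\nu_0\) on \(C\). The sets \(C_{n,m}=\{P^n(\cdot,C)\ge 1/m\}\) cover \(S\). The upper semicontinuous analogue of your Step~1 (closed targets, condition~3, portmanteau for u.s.c.\ envelopes) shows that \(y\mapsto P^k(y,F)\) is upper semicontinuous at continuity points of \(H\) for closed \(F\). Hence \(\overline{C_{n,m}}\setminus D_H\) is still small. Baire's theorem in the Polish open set \(\operatorname{Int}(\operatorname{supp}\psi)\) then gives a nonempty open \(U\) contained in some \(\overline{C_{n,m}}\), so \(\psi(U)>0\). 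Condition~3 applied to the compact set \(\{y\}\) gives \(P(y,D_H)=Q(H(y),D_H)=0\) for every \(y\), so \(P^k(x,D_H)=0\) for \(k\ge1\). Your Step~2 with target \(U\), followed by your convolution step, then shows that \(K\) is petite.

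Note also that you, like the paper's sketch, use compactness of \(\overline{H(K)}\). This is not part of Definition~\ref{def:qf}, and the statement is false without it. Take \(S=W=\mathbb R\), let \(Q(z,\cdot)\) be the law of \(z+\xi\) with \(\xi\) standard Gaussian, and set \(H(x)=1/x\) for \(0<|x|<1\), \(H(0)=0\), \(H(x)=x/2\) for \(|x|\ge1\). All three conditions hold and the chain is Lebesgue-irreducible. But from \(x\) near \(0\) the chain needs about \(\log_2(1/x)\) steps to come back to any bounded set. So \(\sum_n a_nP^n(x,[-M,M])\to0\) as \(x\downarrow0\) for every \(a\) and \(M\), and \([0,1]\) is not petite.
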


We recall the proof mechanism. The non-empty interior assumption allows one to
choose an open set \(O\subset \operatorname{supp}\psi\) with \(\psi(O)>0\).
Irreducibility gives accessibility of \(O\) from every point. The quasi-Feller
factorization then turns pointwise positivity of hitting probabilities of open
sets into local lower bounds, because \(w\mapsto Q(w,O)\) is lower
semicontinuous. Finally, compactness of \(H(K)\) permits a finite covering
argument, yielding a uniform minorization for an averaged kernel on \(K\).

Thus Theorem~\ref{thm:qf-compact-petite} gives compact petite sets without a
direct local minorization estimate. Without an open set of positive
\(\psi\)-mass, the above argument has no direct substitute.

\subsection{Lyapunov sublevel sets}

Let \(V:S\to[1,\infty)\) be measurable. We say that \(V\) is coercive if its
sublevel sets
\[
        C_R=\{x\in S: V(x)\le R\}
\]
are compact for all \(R<\infty\).

Assume that \(V\) is coercive and that \(P\) satisfies the geometric drift
condition
\[
        PV\le \gamma V+K,
        \qquad \gamma<1,\quad K<\infty .
\]
Then the sublevel sets required in the Hairer--Mattingly theorem,
\[
        C_R=\{V\le R\},
        \qquad R>\frac{2K}{1-\gamma},
\]
are compact. Hence, by Theorem~\ref{thm:qf-compact-petite}, they are petite
whenever \(P\) is quasi-Feller, \(\psi\)-irreducible, and
\(\operatorname{Int}(\operatorname{supp}\psi)\neq\varnothing\).

\section{Averaged kernels, aperiodicity and Harris contraction}
\label{sec:harris}

We recall the standard facts used below.

\subsection{Averaged kernels}

If \(C\) is petite, then for some
probability distribution \(a=(a_n)_{n\ge1}\) the averaged kernel
\[
        K_a=\sum_{n\ge1}a_nP^n
\]
has \(C\) as a small set.

The drift condition also passes to \(K_a\). If
\[
        PV\le \gamma V+K,\qquad \gamma<1,
\]
then, for \(n\ge1\),
\[
        P^nV\le \gamma^n V+K\frac{1-\gamma^n}{1-\gamma}.
\]
Consequently
\[
        K_aV
        \le
        \bar\gamma V+
        \frac{K(1-\bar\gamma)}{1-\gamma},
        \qquad
        \bar\gamma=\sum_{n\ge1}a_n\gamma^n<1.
\]
Thus \(K_a\) inherits a geometric Lyapunov drift from \(P\).

\subsection{Aperiodicity and the petite-to-small step}

Let \(P\) be a \(\psi\)-irreducible Markov kernel. Aperiodicity is the condition
which turns petite-set minorization into deterministic-time minorization. We use
the following standard consequence; see
\cite{Nummelin1984,MeynTweedie2009}.

\begin{proposition}[Petite sets are small under aperiodicity]
\label{prop:petite-small}
Let \(P\) be a \(\psi\)-irreducible and aperiodic Markov kernel. If \(C\) is
petite, then \(C\) is small. Equivalently, there exist an integer \(m\ge1\)
and a non-zero measure \(\nu\) such that
\[
        P^m(x,\cdot)\ge \nu(\cdot),
        \qquad x\in C .
\]
\end{proposition}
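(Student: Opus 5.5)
The plan follows the standard Nummelin--Meyn--Tweedie route. Since \(P\) is \(\psi\)-irreducible, there is a \(\psi\)-positive small set \(A\), i.e.\ some \(m_0\ge1\), \(\delta>0\) and a probability measure \(\nu_0\) with \(\nu_0(A)>0\) and \(P^{m_0}(x,\cdot)\ge\delta\nu_0(\cdot)\) for \(x\in A\) (the existence of small sets for \(\psi\)-irreducible chains). Set
\[
E_A=\{n\ge1:\ \exists\,\delta_n>0,\ P^n(x,\cdot)\ge\delta_n\nu_0(\cdot)\ \ \forall x\in A\}.
\]
The set \(E_A\) is closed under addition: if \(n,n'\in E_A\), then for \(x\in A\) we have \(P^{n+n'}(x,\cdot)\ge\delta_n\int\nu_0(dy)P^{n'}(y,\cdot)\ge\delta_n\nu_0(A)\delta_{n'}\nu_0\). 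Aperiodicity means \(\gcd E_A=1\). An additive semigroup of integers with gcd one contains every integer \(n\ge N_0\), so \(P^n(x,\cdot)\ge\delta_n\nu_0\) on \(A\) for all \(n\ge N_0\).

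Next, I would transfer petiteness of \(C\) into accessibility of \(A\) from \(C\) at a fixed time. Since \(C\) is petite, \(K_a(x,\cdot)\ge\nu\) on \(C\) with \(\nu\neq0\). The step I expect to be the main obstacle is ensuring that the minorizing measure charges \(A\). To handle it, I would first replace \(\nu\) by \(\nu K_{b}\) with \(b_n=2^{-n}\), using \(K_aK_b=K_{a*b}\). By \(\psi\)-irreducibility, \(K_b(y,A)>0\) for every \(y\), so \((\nu K_b)(A)>0\). Thus, after renaming \(a\), we may assume \(\nu(A)>0\), and then
\[
\sum_n a_nP^n(x,A)\ \ge\ \nu(A)>0\qquad\text{for all }x\in C.
\]
This still only gives an averaged statement. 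To extract a fixed time, I would combine the averaged bound with the eventual small-set minorization on \(A\): for every \(k\ge N_0\) and \(x\in C\),
\[
P^{n+k}(x,\cdot)\ \ge\ \int_A P^n(x,dy)P^k(y,\cdot)\ \ge\ \delta_kP^n(x,A)\,\nu_0(\cdot).
\]
Choose a finite \(M\) with \(\sum_{n\le M}a_n\ge \tfrac12\nu(A)\)-weighted mass, i.e.\ \(\sum_{n\le M}a_nP^n(x,A)\ge\tfrac12\nu(A)\) uniformly on \(C\) (truncate the tail, since \(\sum_{n>M}a_n\to0\)). Now fix \(m=M+N_0\). For each \(n\le M\) we have \(k=m-n\ge N_0\), so with \(\delta_*=\min_{N_0\le k\le M+N_0-1}\delta_k>0\),
\[
P^m(x,\cdot)\ \ge\ \delta_*\,P^n(x,A)\,\nu_0(\cdot)\qquad(1\le n\le M).
\]
Averaging over \(n\le M\) with weights \(a_n/\sum_{j\le M}a_j\) gives
\[
P^m(x,\cdot)\ \ge\ \delta_*\,\frac{\nu(A)}{2}\,\nu_0(\cdot),\qquad x\in C.
\]
So \(C\) is small, with fixed time \(m\) and the non-zero measure \(\tfrac12\delta_*\nu(A)\nu_0\). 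The remaining technical points are routine: the existence of a \(\psi\)-positive small set, and checking that the definition of aperiodicity via \(\gcd E_A\) is the one intended (independence from the choice of \(A\) is standard). For both I would cite \cite{Nummelin1984,MeynTweedie2009} rather than reprove them.
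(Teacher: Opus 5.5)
Your argument is correct and is the standard Nummelin--Meyn--Tweedie proof (Theorem~5.5.7 in \cite{MeynTweedie2009}); the paper does not prove this proposition but cites exactly that result, and your steps reproduce it: charge a $\psi$-positive small set $A$ by passing to $\nu K_b$, use aperiodicity to get $P^k\ge\delta_k\nu_0$ on $A$ for all $k\ge N_0$, truncate the sampling distribution at $M$, and glue at the fixed time $m=M+N_0$. One point to make explicit: the paper's irreducibility definition sums from $n=0$, so for $y\in A$ the claim $K_b(y,A)>0$ needs the one-line reduction $\sum_{n\ge1}P^n(y,A)=\int P(y,dz)\sum_{k\ge0}P^k(z,A)>0$.
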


Combining Proposition~\ref{prop:petite-small} with the quasi-Feller
compact-petite-set criterion gives the following immediate consequence.

\begin{corollary}
\label{cor:qf-compact-small}
Let \(P\) be quasi-Feller, \(\psi\)-irreducible and aperiodic. Assume that
\[
        \operatorname{Int}(\operatorname{supp}\psi)\neq\varnothing .
\]
Then every compact set \(K\subset S\) is small.
\end{corollary}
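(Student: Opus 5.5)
The corollary follows by chaining two results already stated. Fix a compact set \(K\subset S\). The hypotheses of Theorem~\ref{thm:qf-compact-petite} hold: \(P\) is quasi-Feller on the Polish space \(S\), \(\psi\)-irreducible, and \(\operatorname{Int}(\operatorname{supp}\psi)\neq\varnothing\). The theorem therefore shows that \(K\) is petite. In other words, there are a probability distribution \(a=(a_n)_{n\ge1}\) and a non-zero measure \(\nu_a\) with \(K_a(x,\cdot)\ge\nu_a(\cdot)\) for all \(x\in K\).

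Next I would apply Proposition~\ref{prop:petite-small}. Its hypotheses are \(\psi\)-irreducibility and aperiodicity of \(P\), both of which are assumed in the corollary. Since \(K\) is petite, the proposition gives an integer \(m\ge1\) and a non-zero measure \(\nu\) such that \(P^m(x,\cdot)\ge\nu(\cdot)\) for all \(x\in K\). This is exactly the statement that \(K\) is small. Because \(K\) was an arbitrary compact set, the corollary follows.

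There is no genuine analytic obstacle here; the substance lies in the two cited results. The only point to check is that the notions match. The \(\psi\) used in Theorem~\ref{thm:qf-compact-petite} must be an irreducibility measure for the same kernel \(P\) to which Proposition~\ref{prop:petite-small} is applied. Measurability of \(K\) is automatic, since compact sets in a Polish space are closed and hence Borel. Finally, aperiodicity is understood in the standard \(\psi\)-irreducible sense of Nummelin and Meyn--Tweedie, which is the sense required by Proposition~\ref{prop:petite-small}. I would record these checks in a sentence and conclude.
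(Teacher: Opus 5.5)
Your proposal is correct and is exactly the paper's argument: every compact set is petite by Theorem~\ref{thm:qf-compact-petite}, and petite sets are small under aperiodicity by Proposition~\ref{prop:petite-small}. Your extra consistency checks are correct and add nothing the paper needs beyond this chaining: that compact sets are Borel, that the same irreducibility measure is used throughout, and that aperiodicity is meant in the standard sense.
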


Indeed, every compact set is petite by
Theorem~\ref{thm:qf-compact-petite}, and petite sets are small under
aperiodicity.

\subsection{The Hairer--Mattingly contraction theorem}

We recall the version of the Hairer--Mattingly theorem used below. Let \(P\)
be a Markov kernel on \(S\), and let \(V:S\to[0,\infty)\) be a measurable
Lyapunov function. Assume that there exist constants \(\gamma\in(0,1)\) and
\(K<\infty\) such that
\[
        PV\le \gamma V+K .
\]
Let
\[
        C_R=\{x\in S:V(x)\le R\},
        \qquad
        R>\frac{2K}{1-\gamma}.
\]
Assume that \(C_R\) is small, i.e. that there exist \(\alpha\in(0,1)\) and a
probability measure \(\nu\) such that
\[
        P(x,\cdot)\ge \alpha\nu(\cdot),
        \qquad x\in C_R .
\]

For \(\beta>0\), define
\[
        \|\varphi\|_\beta
        =
        \sup_{x\in S}
        \frac{|\varphi(x)|}{1+\beta V(x)}
\]
and the associated dual distance on probability measures
\[
        \rho_\beta(\mu_1,\mu_2)
        =
        \sup_{\|\varphi\|_\beta\le1}
        \int_S \varphi\,d(\mu_1-\mu_2).
\]
Equivalently,
\[
        \rho_\beta(\mu_1,\mu_2)
        =
        \int_S (1+\beta V(x))\,|\mu_1-\mu_2|(dx),
\]
so \(\rho_\beta\) is a weighted total variation distance.

\begin{theorem}[Hairer--Mattingly]
\label{thm:HM}
Under the preceding drift and small-set assumptions, there exist
\(\beta>0\) and \(\bar\gamma\in(0,1)\) such that
\[
        \rho_\beta(\mu_1P,\mu_2P)
        \le
        \bar\gamma\,\rho_\beta(\mu_1,\mu_2)
\]
for all probability measures \(\mu_1,\mu_2\) with finite \(V\)-moment.
Consequently, \(P\) admits a unique invariant probability measure \(\pi\), and
there exist constants \(C<\infty\) and \(\rho\in(0,1)\) such that
\[
        \|P^n(x,\cdot)-\pi\|_V
        \le
        C(1+V(x))\rho^n,
        \qquad n\ge0 .
\]
\end{theorem}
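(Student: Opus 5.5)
The plan is the Hairer--Mattingly coupling-free contraction argument in the weighted norm. I will prove the one-step contraction for \(\rho_\beta\) first, and then deduce existence, uniqueness and geometric convergence from the Banach fixed point theorem.

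\textbf{Reduction to Dirac masses.} Any pair of probabilities can be written as \(\mu_1-\mu_2=\|\mu_1-\mu_2\|_{TV}(\tilde\mu_1-\tilde\mu_2)/2\) with mutually singular \(\tilde\mu_i\). Using the integral representation of \(\rho_\beta\) and convexity, it suffices to show
\[
        |Pf(x)-Pf(y)|\le \bar\gamma\,\bigl(2+\beta V(x)+\beta V(y)\bigr)
\]
for all \(x\ne y\) and all \(f\) with \(\|f\|_\beta\le1\). It also suffices to do so in the equivalent form with the seminorm \(\sup_{x\neq y}|f(x)-f(y)|/(2+\beta V(x)+\beta V(y))\), where \(f\) may be shifted by constants. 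I will first show that this seminorm defines the same metric \(\rho_\beta\) on pairs of probabilities.

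\textbf{The key dichotomy.} I split the pairs \((x,y)\) into two cases.

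\emph{First case: \(V(x)+V(y)\ge R\).} The drift condition gives
\[
        |Pf(x)-Pf(y)|\le 2+\beta\bigl(\gamma V(x)+\gamma V(y)+2K\bigr).
\]
Since \(R>2K/(1-\gamma)\), one checks that for \(\beta\) small there is \(\gamma_1<1\) with this bound at most \(\gamma_1(2+\beta V(x)+\beta V(y))\). The concrete choice is \(\gamma_0\in(\gamma+2K/R,1)\). Then \(2+\beta\gamma V+2\beta K\le 2+\beta\gamma_0 V\), where \(V=V(x)+V(y)\ge R\). The ratio with \(2+\beta V\) is then bounded by \((2+\beta\gamma_0 R)/(2+\beta R)<1\).

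\emph{Second case: \(V(x)+V(y)<R\).} Both points lie in \(C_R\). Split \(P(x,\cdot)=\alpha\nu+(1-\alpha)\tilde P(x,\cdot)\) with a Markov \(\tilde P\), and similarly for \(y\). The \(\nu\)-terms cancel, so
\[
        |Pf(x)-Pf(y)|\le (1-\alpha)\,2+\beta(\gamma V(x)+\gamma V(y)+2K),
\]
after normalising \(f\) so that \(|f|\le 1+\beta V\). With \(\beta\) small compared with \(\alpha\), say \(\beta K\le\alpha/4\), this is at most \((1-\alpha/2)\cdot 2+\beta\gamma(V(x)+V(y))\). That in turn is at most \(\max(1-\alpha/2,\gamma)\,(2+\beta V(x)+\beta V(y))\).

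Taking \(\bar\gamma=\max(\gamma_1,1-\alpha/2,\gamma)\) gives the contraction.

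\textbf{Consequences.} The space of probabilities with finite \(V\)-moment is complete under \(\rho_\beta\), being a weighted total variation space. The Banach fixed point theorem then gives a unique invariant \(\pi\); that \(\pi\) has a finite \(V\)-moment follows from the drift, since \(\pi V\le K/(1-\gamma)\). Iterating the contraction gives
\[
        \rho_\beta(\delta_xP^n,\pi)\le\bar\gamma^n\rho_\beta(\delta_x,\pi)\le \bar\gamma^n\bigl(2+\beta V(x)+\beta\pi V\bigr).
\]
Since \(\|\cdot\|_V\) and \(\rho_\beta\) are equivalent up to constants depending on \(\beta\), this yields \(C(1+V(x))\rho^n\) with \(\rho=\bar\gamma\).

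\textbf{Main obstacle.} I expect the main obstacle to be the second case. There the oscillation seminorm must be used correctly: \(f\) has to be shifted by a constant so that \(|f|\le 1+\beta V\) pointwise, and the leftover \(2\beta K\) term has to be absorbed by the minorization gain. The plan is to fix \(\gamma_0\) first, then choose \(\beta\) small enough for both cases simultaneously.
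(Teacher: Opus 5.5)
Your proposal is correct and is essentially the Hairer--Mattingly argument that the paper invokes by citation. The drift gives contraction of $2+\beta V(x)+\beta V(y)$ when $V(x)+V(y)\ge R$, and the minorization with $\beta K\le\alpha/4$ gives it inside $C_R$; your Jordan-decomposition and product-coupling reduction then yields the $\rho_\beta$-contraction and Banach's fixed point theorem. Starting from $\|f\|_\beta\le1$, as you do, already makes the constant-shift step unnecessary. The only point to tighten is uniqueness among \emph{all} invariant probabilities: for an arbitrary invariant $\pi$, derive $\pi V\le K/(1-\gamma)$ from $\pi(V\wedge N)=\pi P^n(V\wedge N)\le \pi\bigl[(\gamma^nV+K/(1-\gamma))\wedge N\bigr]$, letting $n\to\infty$ and then $N\to\infty$. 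Do not derive it from $\pi V=\pi PV\le\gamma\pi V+K$, which presupposes $\pi V<\infty$.
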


The proof in \cite{HairerMattingly2011} is direct. The parameter \(\beta\) is
chosen so that the Lyapunov drift gives contraction away from the sublevel
set, while the minorization gives contraction inside the sublevel set. No
excursion decomposition or return-time estimates are used.

\section{From quasi-Feller petite sets to Harris contraction}
\label{sec:main}

Let \(S\) be a Polish space and let \(P\) be a Markov kernel on \(S\). Let
\(V:S\to[1,\infty)\) be a measurable coercive function, so that
\[
        C_R=\{x\in S:V(x)\le R\}
\]
is compact for every \(R<\infty\). Assume that \(P\) satisfies the geometric
drift condition
\begin{equation}\label{eq:drift}
        PV\le \gamma V+K,
        \qquad \gamma\in(0,1),\quad K<\infty .
\end{equation}

\subsection{The averaged kernel}

We first record the conclusion available without aperiodicity.

\begin{proposition}[Averaged kernel]
\label{prop:averaged}
Assume that \(P\) is quasi-Feller and \(\psi\)-irreducible, with
\[
        \operatorname{Int}(\operatorname{supp}\psi)\neq\varnothing .
\]
Assume also that \(P\) satisfies the coercive geometric drift
\eqref{eq:drift}. Then every Lyapunov sublevel set
\(C_R=\{V\le R\}\) is petite.

Consequently, if \(R>2K/(1-\gamma)\), there exists a probability distribution
\(a=(a_n)_{n\ge1}\) on \(\mathbb N^*\) such that the averaged kernel
\[
        K_a=\sum_{n\ge1}a_nP^n
\]
has \(C_R\) as a small set. Moreover \(K_a\) satisfies a geometric Lyapunov
drift for which the same threshold \(R>2K/(1-\gamma)\) is sufficient in the
Hairer--Mattingly theorem. Hence \(K_a\) is geometrically ergodic in weighted
total variation.
\end{proposition}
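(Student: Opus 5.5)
The plan is to chain Theorem~\ref{thm:qf-compact-petite}, the drift computation of Section~\ref{sec:harris}, and Theorem~\ref{thm:HM} applied to \(K_a\) in place of \(P\). The one point needing care is that the Hairer--Mattingly threshold for \(K_a\) reduces to \(2K/(1-\gamma)\).

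\emph{Step 1: sublevel sets are petite.} Since \(V\) is coercive, every \(C_R=\{V\le R\}\) is compact. Since \(P\) is quasi-Feller and \(\psi\)-irreducible with \(\operatorname{Int}(\operatorname{supp}\psi)\neq\varnothing\), Theorem~\ref{thm:qf-compact-petite} shows that every such \(C_R\) is petite. This gives the first assertion.

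\emph{Step 2: one-step minorization for \(K_a\).} Now fix \(R>2K/(1-\gamma)\). By the definition of petite sets there is a probability distribution \(a=(a_n)_{n\ge1}\) and a non-zero measure \(\nu\) with \(K_a(x,\cdot)\ge\nu(\cdot)\) for \(x\in C_R\). This is exactly a one-step small-set condition for the Markov kernel \(K_a\). To put it in the form required by Theorem~\ref{thm:HM}, set \(\alpha=\min\{\nu(S),1/2\}\) and \(\tilde\nu=\nu/\nu(S)\). Because \(K_a\) is Markov, \(0<\nu(S)\le1\). Hence \(K_a(x,\cdot)\ge\alpha\tilde\nu\) on \(C_R\), with \(\alpha\in(0,1)\) and \(\tilde\nu\) a probability measure.

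\emph{Step 3: drift for \(K_a\).} Iterating \eqref{eq:drift}, using positivity and monotonicity of \(P\) and \(P1=1\), gives
\[
P^nV\le\gamma^nV+K\frac{1-\gamma^n}{1-\gamma}.
\]
Averaging against \(a\) (monotone convergence for the series) yields
\[
K_aV\le\bar\gamma V+K',\qquad \bar\gamma=\sum_{n\ge1}a_n\gamma^n,\qquad K'=\frac{K(1-\bar\gamma)}{1-\gamma}.
\]
Here \(\bar\gamma\le\gamma<1\) because \(\gamma^n\le\gamma\) for \(n\ge1\), and \(\bar\gamma>0\) since some \(a_n>0\). The threshold is then preserved:
\[
\frac{2K'}{1-\bar\gamma}=\frac{2K}{1-\gamma},
\]
so the same \(R\) satisfies \(R>2K'/(1-\bar\gamma)\).

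\emph{Step 4: apply Hairer--Mattingly.} With these ingredients, Theorem~\ref{thm:HM} applied to the kernel \(K_a\), with constants \(\bar\gamma,K'\) and the same Lyapunov function \(V\ge1\), gives a \(\rho_\beta\)-contraction for \(K_a\). It also gives a unique \(K_a\)-invariant probability measure and geometric convergence of \(K_a^n(x,\cdot)\) in \(\|\cdot\|_V\) at rate \(C(1+V(x))\rho^n\).

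\emph{Main obstacle.} The only genuinely non-formal point is Step 1, which is entirely carried by Theorem~\ref{thm:qf-compact-petite}. Among the remaining steps, the one to watch is the threshold bookkeeping in Step 3: a careless bound on \(K'\), for instance \(K'\le K/(1-\gamma)\), would lose the identity \(2K'/(1-\bar\gamma)=2K/(1-\gamma)\). I would therefore keep the exact expression for \(K'\). I would also check that the infinite averaging is harmless, which holds because every term is nonnegative.
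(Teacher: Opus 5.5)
Your proposal is correct and follows the paper's proof step for step. Compactness of $C_R$ together with Theorem~\ref{thm:qf-compact-petite} gives petiteness, hence a one-step minorization for $K_a$; iterating and averaging the drift gives $K_aV\le\bar\gamma V+K(1-\bar\gamma)/(1-\gamma)$, whose Hairer--Mattingly threshold is again $2K/(1-\gamma)$; and Theorem~\ref{thm:HM} is then applied to $K_a$. Your normalization of $\nu$ into the form $\alpha\tilde\nu$ and your check that $\bar\gamma\in(0,1)$ are details the paper leaves implicit, and you handle them correctly.
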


\begin{proof}
Since \(V\) is coercive, \(C_R\) is compact; hence it is petite by
Theorem~\ref{thm:qf-compact-petite}. Thus, for some probability distribution
\(a=(a_n)_{n\ge1}\) and some non-zero measure \(\nu\),
\[
        K_a(x,\cdot)
        =
        \sum_{n\ge1}a_nP^n(x,\cdot)
        \ge \nu(\cdot),
        \qquad x\in C_R .
\]
Therefore \(C_R\) is small for \(K_a\).

It remains only to check the drift. From \eqref{eq:drift},
\[
        P^nV
        \le
        \gamma^nV
        +
        K\frac{1-\gamma^n}{1-\gamma},
        \qquad n\ge1.
\]
Thus
\[
        K_aV
        \le
        \bar\gamma V
        +
        \frac{K(1-\bar\gamma)}{1-\gamma},
        \qquad
        \bar\gamma=\sum_{n\ge1}a_n\gamma^n<1 .
\]
The corresponding Hairer--Mattingly threshold is
\[
        \frac{2}{1-\bar\gamma}
        \frac{K(1-\bar\gamma)}{1-\gamma}
        =
        \frac{2K}{1-\gamma}.
\]
Hence the assumption \(R>2K/(1-\gamma)\) is sufficient for \(K_a\), and the
Hairer--Mattingly theorem gives the claimed geometric ergodicity.
\end{proof}

\subsection{The original kernel}

Aperiodicity converts the averaged minorization into a deterministic-time
minorization.

\begin{theorem}[Geometric ergodicity of the original chain]
\label{thm:main}
Assume that \(P\) is quasi-Feller, \(\psi\)-irreducible and aperiodic, with
\[
        \operatorname{Int}(\operatorname{supp}\psi)\neq\varnothing .
\]
Assume also that \(P\) satisfies the coercive geometric drift
\eqref{eq:drift}. Then \(P\) admits a unique invariant probability measure
\(\pi\). Moreover, there exist constants \(C<\infty\) and \(\rho\in(0,1)\)
such that
\begin{equation}\label{eq:geometric-convergence}
        \|P^n(x,\cdot)-\pi\|_V
        \le
        C(1+V(x))\rho^n,
        \qquad x\in S,\quad n\ge0 .
\end{equation}
Thus \(P\) is geometrically ergodic in weighted total variation.
\end{theorem}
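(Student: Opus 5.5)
The plan is to apply Theorem~\ref{thm:HM} to the skeleton kernel \(P^m\), where \(m\) is the deterministic time supplied by aperiodicity, and then pass the conclusion back to \(P\).

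\textbf{Step 1: a small sublevel set.} Fix \(R>2K/(1-\gamma)\). Since \(V\) is coercive, \(C_R\) is compact. By Corollary~\ref{cor:qf-compact-small}, \(C_R\) is small. So there exist \(m\ge1\) and a non-zero measure \(\nu\) with \(P^m(x,\cdot)\ge\nu(\cdot)\) for \(x\in C_R\). Write \(\nu=\alpha\bar\nu\), where \(\bar\nu\) is a probability measure and \(\alpha=\nu(S)\in(0,1]\). We may reduce \(\alpha\) so that \(\alpha<1\).

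\textbf{Step 2: drift for the skeleton.} Iterating \eqref{eq:drift} gives
\[
P^mV\le\gamma^mV+K\frac{1-\gamma^m}{1-\gamma}=:\gamma_mV+K_m .
\]
The Hairer--Mattingly threshold for \(P^m\) is
\[
\frac{2K_m}{1-\gamma_m}=\frac{2K}{1-\gamma},
\]
exactly as in the computation in Proposition~\ref{prop:averaged}. So the same \(R\) works. Theorem~\ref{thm:HM} applied to \(P^m\) then gives a unique invariant probability \(\pi\) for \(P^m\), together with
\[
\|P^{mk}(x,\cdot)-\pi\|_V\le C_0(1+V(x))\rho_0^k .
\]

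\textbf{Step 3: \(\pi\) is invariant and unique for \(P\).} The measure \(\pi P\) is invariant for \(P^m\), since \(\pi P P^m=\pi P^mP=\pi P\). It has finite \(V\)-moment because \(\pi(V)<\infty\) and \(PV\le\gamma V+K\). Uniqueness for \(P^m\) then gives \(\pi P=\pi\). Any \(P\)-invariant probability is also \(P^m\)-invariant, so \(\pi\) is unique for \(P\) as well. If uniqueness for \(P^m\) is only known among measures with finite \(V\)-moment, note that every invariant probability of \(P^m\) integrates \(V\): this follows from the drift by a truncation and Cesàro-averaging argument.

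\textbf{Step 4: interpolation to all times.} Write \(n=mk+j\) with \(0\le j<m\). Then
\[
P^n(x,\cdot)-\pi=\bigl(P^j(x,\cdot)P^{mk}-\pi\bigr)=\int P^j(x,dy)\bigl(P^{mk}(y,\cdot)-\pi\bigr).
\]
Hence
\[
\|P^n(x,\cdot)-\pi\|_V\le C_0\rho_0^k\bigl(1+P^jV(x)\bigr)\le C_0\rho_0^k\bigl(1+V(x)+K/(1-\gamma)\bigr).
\]
Since \(V\ge1\), the right-hand side is at most \(C(1+V(x))\rho^n\) with \(\rho=\rho_0^{1/m}\) and \(C\) absorbing \(\rho_0^{-1}\) and the constant. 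This is \eqref{eq:geometric-convergence}.

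\textbf{Main obstacle.} The delicate point is Step 1, namely that Corollary~\ref{cor:qf-compact-small} delivers one fixed \(m\) for the specific set \(C_R\). That is exactly Proposition~\ref{prop:petite-small}, which we assume. Everything after it is bookkeeping: the threshold is invariant under passing to \(P^m\), and the invariance transfer and interpolation in Steps 3 and 4 are routine.
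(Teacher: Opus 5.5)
Your proposal is correct and follows the paper's proof step for step. The steps are: the compact sublevel set is made small via the petite-to-small step; Hairer--Mattingly is applied to the skeleton \(P^m\) using the identity \(2K_m/(1-\gamma_m)=2K/(1-\gamma)\); invariance is transferred via \(\pi P P^m=\pi P\); and the bound is interpolated over \(n=mk+j\). Your Step~3 is slightly more careful than the paper, since you check that \(\pi P\) has finite \(V\)-moment and state uniqueness for \(P\) itself. Your Step~4 integrates the skeleton bound against \(P^j(x,dy)\) rather than using boundedness of \(P^r\) in the weighted norm, which is an equivalent bookkeeping choice.
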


\begin{proof}
Choose \(R>2K/(1-\gamma)\). Since \(V\) is coercive,
\(C_R=\{V\le R\}\) is compact. By
Theorem~\ref{thm:qf-compact-petite}, \(C_R\) is petite. Since \(P\) is
aperiodic, Proposition~\ref{prop:petite-small} implies that \(C_R\) is small.
Thus there exist an integer \(m\ge1\) and a non-zero measure \(\nu\) such that
\[
        P^m(x,\cdot)\ge \nu(\cdot),
        \qquad x\in C_R .
\]

If \(m=1\), the Hairer--Mattingly theorem applies directly to \(P\), using the
drift condition and the smallness of \(C_R\). This gives
\eqref{eq:geometric-convergence}.

Assume now that \(m>1\). We apply the same argument to the skeleton \(P^m\).
Iterating the drift condition gives
\[
        P^mV
        \le
        \gamma^m V
        +
        K\frac{1-\gamma^m}{1-\gamma}.
\]
Thus \(P^m\) satisfies a geometric drift, with drift coefficient \(\gamma^m\)
and constant \(K(1-\gamma^m)/(1-\gamma)\), and \(C_R\) is small for \(P^m\).

Moreover, the same Lyapunov level is sufficiently large for the skeleton.
Indeed, the Hairer--Mattingly threshold associated with this drift is
\[
        \frac{
        2K(1-\gamma^m)/(1-\gamma)
        }{
        1-\gamma^m
        }
        =
        \frac{2K}{1-\gamma}.
\]
Hence the original choice \(R>2K/(1-\gamma)\) already ensures that \(C_R\) is
a sufficiently large sublevel set for \(P^m\). Theorem~\ref{thm:HM} therefore
applies to \(P^m\). Hence \(P^m\) has a unique invariant probability measure
\(\pi\) with finite \(V\)-moment, and
\[
        \|(P^m)^q(x,\cdot)-\pi\|_V
        \le
        C_0(1+V(x))\rho_0^q,
        \qquad q\ge0 ,
\]
for some \(C_0<\infty\) and \(\rho_0\in(0,1)\).
The measure \(\pi\) is invariant for \(P\). Indeed, if \(\pi P^m=\pi\), then
\[
        (\pi P)P^m = \pi P^{m+1} = (\pi P^m)P = \pi P .
\]
Thus \(\pi P\) is also invariant for \(P^m\). By uniqueness of the invariant
probability measure of \(P^m\), one has \(\pi P=\pi\).

It remains to pass from the skeleton to all iterates. Write
\[
        n=qm+r,
        \qquad 0\le r<m.
\]
For each \(r\in\{0,\ldots,m-1\}\), the drift condition gives
\[
        P^rV\le A_r(1+V)
\]
for some finite constant \(A_r\). Hence the finitely many operators \(P^r\)
are bounded in the weighted total variation norm. Applying \(P^r\) to the
skeleton estimate and absorbing the finite constants into \(C\) yields
\eqref{eq:geometric-convergence}.
\end{proof}

\begin{remark}
Aperiodicity is used only to pass from petite sets to small sets. Without it,
the conclusion remains available for a suitable averaged kernel \(K_a\), but
not necessarily for the deterministic iterates of \(P\).
\end{remark}

\section{Example}
\label{sec:examples}

We record a simple autoregressive example. Examples of ARCH type and
Markov-switching type are discussed in \cite{Attali2004}.

Let \(f:\mathbb R\to\mathbb R\) be Riemann-integrable on compact intervals and
assume that, for some \(a\in(0,1)\) and \(B<\infty\),
\[
        |f(x)|\le a|x|+B .
\]
Let \((\xi_n)_{n\ge1}\) be i.i.d. with density \(p\), where
\[
        p>0\quad \lambda\text{-a.e.},
        \qquad
        \mathbb E|\xi_1|<\infty .
\]
Consider
\[
        X_{n+1}=f(X_n)+\xi_{n+1}.
\]
If \(f\) is discontinuous, then the transition kernel \(P\) need not be Feller.

The quasi-Feller factorization is
\[
        P(x,\cdot)=Q(f(x),\cdot),
        \qquad
        Q(z,A)=\int_{\mathbb R}\mathbf 1_A(z+u)p(u)\,du .
\]
The kernel \(Q\) is Feller and  the
discontinuity set \(D_f\) has Lebesgue measure zero. Since
\(Q(z,\cdot)\ll\lambda\) for every \(z\), \(Q(z,D_f)=0\). Hence \(P\) is
quasi-Feller.

The chain is Lebesgue-irreducible and aperiodic, since \(p>0\)
\(\lambda\)-a.e. Finally, with \(V(x)=1+|x|\),
\[
        PV(x)
        =
        1+\mathbb E|f(x)+\xi_1|
        \le
        1+a|x|+B+\mathbb E|\xi_1|
        \le
        aV(x)+K
\]
for some finite \(K\). Since \(V\) is coercive, all assumptions of
Theorem~\ref{thm:main} are satisfied.

\bibliography{yet}

\end{document}